\newcommand{\field}[1]{\mathbb{#1}}
\newcommand{\R}{\field{R}}
\newcommand{\C}{\field{C}}
\newcommand{\N}{\field{N}}
 \def\cC{\mathscr{C}}
 \def\cF{\mathscr{F}}
\def\cL{\mathscr{L}}
\def\cO{\mathscr{O}}
\def\cP{\mathscr{P}}
\def\mF{\mathcal{F}}
\def\mO{\mathcal{O}}
\newcommand{\boldsym}[1]{\boldsymbol{#1}}
\newcommand\br{\boldsym{r}}
\newcommand{\imat}{\sqrt{-1}}
\DeclareMathOperator{\End}{End}
\DeclareMathOperator{\ric}{Ric}
\newcommand{\om}{\omega}
\newtheorem{thm}{Theorem}%[section]
\newtheorem{prop}[thm]{Proposition}
\theoremstyle{definition}
\newtheorem{rem}[thm]{Remark}
\theoremstyle{definition}
\newcommand{\be}{\begin{eqnarray}}
\newcommand{\ee}{\end{eqnarray}}
\newcommand{\ov}{\overline}
\newcommand{\wi}{\widetilde}
\newcommand{\var}{\varepsilon}
\newcommand{\comment}[1]{}
\begin{document}

\title[Remark on the OFF-DIAGONAL EXPANSION OF THE
BERGMAN KERNEL]{Remark on the OFF-DIAGONAL EXPANSION OF 
THE BERGMAN KERNEL ON COMPACT K\"AHLER MANIFOLDS}

\date{7th of February, 2013}
%    Information for first author
\author{Xiaonan Ma}
\address{Universit{\'e} Paris Diderot - Paris 7,
UFR de Math{\'e}matiques, Case 7012,
75205 Paris Cedex 13, France}
\thanks{Partially supported by Institut Universitaire de France}
\email{ma@math.jussieu.fr}

%    Information for second author
\author{George Marinescu}
\address{Universit{\"a}t zu K{\"o}ln,  Mathematisches Institut,
    Weyertal 86-90,   50931 K{\"o}ln, Germany\\
    \& Institute of Mathematics `Simion Stoilow', Romanian Academy,
Bucharest, Romania}
\thanks{Partially supported by DFG funded projects SFB/TR 12 and MA 2469/2-2}
\email{gmarines@math.uni-koeln.de}

\subjclass[2010]{53C55, 53C21, 53D50, 58J60}

\begin{abstract} In this short note, we compare our previous works
    on the off-diagonal expansion of the Bergman kernel and the 
    recent preprint of Lu-Shiffman (arxiv.1301.2166). In particular, 
    we note that the 
    vanishing of the coefficient of $p^{-1/2}$ is implicitly contained in 
    Dai-Liu-Ma's work \cite{DLM04a} and was explicitly stated 
    in our book \cite{MM07}.    
\end{abstract}

\maketitle
%\setcounter{section}{-1}
%%%%%%%%%%%%%%%%%%%%%%%%%%%%%%%%%%
%%%%%%%%%%%%%%%%%%%%%%%%%%%%%%%%%
%\section{Introduction} \label{s1}
%%%%%%%%%%%%%%%%%%%%%%%%%%%%%%%%%%
In this short note we revisit the calculations of some coefficients 
of the off-diagonal expansion of the Bergman kernel from 
our previous works \cite{MM07,MM12}.

Let $(X,\omega)$ be a compact K{\"a}hler manifold of
$\dim_{\C}X=n$ with K{\"a}hler form $\omega$. Let $(L, h^L)$ be 
a holomorphic Hermitian line bundle on $X$, and let $(E, h^E)$ 
be a holomorphic Hermitian vector bundle on $X$. 
Let $\nabla^L$, $\nabla^E$ be the holomorphic Hermitian
connections on $(L,h^L)$, $(E, h^E)$  with curvatures
 $R^L=(\nabla^L)^2$, $R^E=(\nabla^E)^2$, respectively.
 We assume that $(L,h^L,\nabla^L)$ is a prequantum line bundle, i.e.,
%satisfies the {\em prequantization condition}, that is
 %\begin{align} \label{toe2.1}
     $\om= \frac{\sqrt{-1}}{2 \pi} R^L$.
%\end{align}
  Let $P_{p}(x,x')$ be the Bergman kernel of $L^p\otimes E$
  with respect to $h^L, h^E$ and the Riemannian volume form 
  $dv_{X}=\om^n/n!$. This is the integral kernel of the orthogonal projection 
  from $\cC^{\infty}(X,L^p\otimes E)$ to the space of holomorphic 
  sections $H^0(X,L^p\otimes E)$ (cf.\ \cite[\S4.1.1]{MM07}).

     We fix $x_0\in X$. We identify the ball $B^{T_{x_0}X}(0,\var)$ 
     in the tangent space $T_{x_0}X$
     to the ball $B^{X}(x_0,\var)$ in $X$ by the exponential map
     (cf.\ \cite[\S4.1.3]{MM07}).
 For $Z\in B^{T_{x_0}X}(0,\var)$ we identify
 $(L_Z, h^L_Z)$, $(E_Z, h^E_Z)$ %and $(L^p\otimes E)_Z$
 to $(L_{x_0},h^L_{x_0})$, $(E_{x_0},h^E_{x_0})$ 
% and $(L^p\otimes E)_{x_0}$
 by parallel transport with respect to the connections
 $\nabla ^L$, $\nabla ^E$ %and $\nabla^{L^p\otimes E}$ 
 along the curve
 $\gamma_Z :[0,1]\ni u \to \exp^X_{x_0} (uZ)$.
 Then $P_p(x,x')$ induces a smooth section
$(Z,Z')\mapsto P_{p,\,x_0}(Z,Z')$
of $\pi ^* \End(E)$ over $\{(Z,Z')\in TX\times_{X} TX:|Z|,|Z'|<\var\}$, 
which depends smoothly on $x_0$, 
with $\pi: TX\times_{X} TX\to X$ the  natural projection. 
If $dv_{TX}$ is the Riemannian volume form
 on $(T_{x_0}X, g^{T_{x_0}X})$, there exists
 a smooth positive function $\kappa_{x_0}:T_{x_0}X\to\R$, 
 defined by
 \begin{equation} \label{atoe2.7}
 dv_X(Z)= \kappa_{x_0}(Z) dv_{TX}(Z),\quad \kappa_{x_0}(0)=1.
 \end{equation}
 For  $Z\in T_{x_0}X\cong\R^{2n}$, we denote $z_j=Z_{2j-1}+\imat 
Z_{2j-1}$ its complex coordinates, and set
\begin{equation}\label{toe1.3}
\cP(Z,Z^{\prime}) = \exp\big(-\frac{\pi}{2}\sum_{i=1}^n
\big(|z_i|^2+|z^{\,\prime}_i|^2 -2z_i\overline{z}_i^{\,\prime}\big)\big)\,.
\end{equation}
%where $Z\in T_{x_0}X\cong\R^{2n}$, $z_j=Z_{2j-1}+\imat Z_{2j-1}$.

The \emph{near off-diagonal} asymptotic expansion of 
the Bergman kernel in the form established 
\cite[Theorem \,4.1.24]{MM07} is the following. 
%-----------------------------------
\begin{thm}\label{t:offdiag}
Given $k,m'\in \N$, $\sigma>0$, there exists
$C>0$ such that if $p\geqslant1$, $x_0\in X$, 
$Z,Z'\in T_{x_0}X$, $|Z|,|Z'| \leqslant
\sigma/\sqrt{p}$,
 \begin{equation}\label{bk2.79}
 \Big|\frac{1}{p^n}P_{p} (Z,Z')
 - \sum_{r=0}^k \cF_r (\sqrt{p}Z,\sqrt{p}Z')
\kappa ^{-\frac{1}{2}}(Z)\kappa^{-\frac{1}{2}}(Z')
p^{-\frac{r}{2}}\Big |_{\cC ^{m'}(X)}
 \leqslant C p^{-\frac{k+1}{2}}.
\end{equation}
%----------------------------------------------
where $\cC ^{m'}(X)$ is the $\cC ^{m'}$-norm with respect to 
the parameter $x_0$, 
\begin{align}\label{bk2.75}
\cF_r(Z,Z')= J_{r}(Z,Z')\cP(Z,Z'),
\end{align}
$J_{r}(Z,Z')\in\End(E)_{x_0}$ are polynomials
in $Z,Z'$ with the same parity as $r$ and 
$\deg J_{r}(Z,Z')\leqslant 3r$, whose coefficients are polynomials 
in $R^{TX}$ (the curvature of the Levi-Civita connection on $TX$), 
$R^E$ and their derivatives of order $\leqslant r-2$. 
\end{thm}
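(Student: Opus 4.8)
The plan is to follow the analytic localization and rescaling method of Dai--Liu--Ma \cite{DLM04a} in the form of \cite[Ch.~4]{MM07}. The first step is \emph{localization}. I would use the spectral gap of the renormalized Kodaira--Bochner Laplacian $\Delta_p$ acting on $\cC^\infty(X,L^p\otimes E)$, whose kernel is $H^0(X,L^p\otimes E)$ and whose nonzero spectrum is bounded below by $C_0 p$ for some $C_0>0$. Combined with the finite propagation speed of the wave operator $\cos(u\sqrt{\Delta_p})$, this shows that $P_p(Z,Z')$ agrees, modulo an error that is $O(p^{-\infty})$ in every $\cC^{m'}$-norm in the parameter $x_0$, with a kernel built solely from the restriction of the geometric data to $B^X(x_0,\var)$. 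The problem is thereby transferred to the single fibre $T_{x_0}X\cong\R^{2n}$.

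Next I would \emph{rescale}. Setting $t=p^{-1/2}$ and $(S_t s)(Z)=s(Z/t)$, the conjugated operator $\mL_t:=t^2 S_t^{-1}\Delta_p S_t$, expressed in the trivializations fixed by parallel transport, converges as $t\to 0$ to a model operator $\mL_0$ on $\R^{2n}$ of harmonic-oscillator type; its kernel is one-dimensional in each fibre of $E$, and the Schwartz kernel of the orthogonal projection $P^{(0)}$ onto that kernel is exactly $\cP(Z,Z')$ from \eqref{toe1.3}. Taylor expanding the metric and connection coefficients in normal coordinates yields
\begin{equation}
\mL_t=\mL_0+\sum_{r=1}^{k}t^r\,\mO_r+O(t^{k+1}),
\end{equation}
where each $\mO_r$ is a differential operator of order $\leqslant 2$, of parity $(-1)^r$ under $Z\mapsto-Z$, whose coefficients are polynomials in $Z$ assembled from $R^{TX}$, $R^E$ and their derivatives of order $\leqslant r-2$ at $x_0$; moreover $\mO_r$ raises the $Z$-degree of any element of the form $(\text{polynomial})\cdot\cP$ by at most $r+2$. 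The factors $\kappa^{-1/2}(Z)\kappa^{-1/2}(Z')$ in \eqref{bk2.79} appear at this stage, from rewriting the kernel with respect to the Euclidean volume $dv_{TX}$ rather than $dv_X$ via \eqref{atoe2.7}.

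The coefficients are then extracted by a \emph{resolvent/contour argument}. Writing the spectral projection as $P_{0,t}=\frac{1}{2\pi\imat}\oint_{\delta}(\lambda-\mL_t)^{-1}\,d\lambda$ for a small circle $\delta$ enclosing $0$ and no other spectrum, I would expand the resolvent as a Neumann series in the $t^r\mO_r$ and integrate term by term. The order-$t^0$ term gives $\cF_0=\cP$, while the order-$t^r$ term is a finite sum of operators
\begin{equation}
\frac{1}{2\pi\imat}\oint_{\delta}(\lambda-\mL_0)^{-1}\mO_{r_1}(\lambda-\mL_0)^{-1}\cdots\mO_{r_j}(\lambda-\mL_0)^{-1}\,d\lambda,\qquad r_1+\cdots+r_j=r,
\end{equation}
whose kernels have the form $J_r(Z,Z')\cP(Z,Z')$, giving \eqref{bk2.75}. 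The two structural claims follow from the algebra of $\mL_0$: for \emph{parity}, $\mL_0$ and $\cP$ are even, each $\mO_{r_i}$ carries the sign $(-1)^{r_i}$, and the product carries $(-1)^r$, so $J_r$ has the parity of $r$; for the \emph{degree bound}, the resolvent $(\lambda-\mL_0)^{-1}$ preserves the $Z$-degree of $(\text{polynomial})\cdot\cP$, each $\mO_{r_i}$ raises it by at most $r_i+2$, and the number of factors satisfies $j\leqslant r$ since each $r_i\geqslant1$, whence $\deg J_r\leqslant\sum_{i=1}^{j}(r_i+2)=r+2j\leqslant 3r$. The claim on the curvature-derivative order is inherited from the corresponding property of the $\mO_{r_i}$.

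Finally, to upgrade these pointwise identities to the uniform estimate \eqref{bk2.79} in the $\cC^{m'}(X)$-norm in $x_0$, I would establish \emph{Sobolev estimates for $(\lambda-\mL_t)^{-1}$ that are uniform in $t\in(0,1]$, in $x_0\in X$, and under arbitrarily many $x_0$-derivatives}. This is where I expect the principal difficulty to lie: $\mL_t$ has unbounded, polynomially growing coefficients over the large ball $|Z|\leqslant\sigma$, so the resolvent bounds must be proved in weighted Sobolev norms that track this growth in $Z$, via a G\r{a}rding-type inequality for $\mL_t$ uniform in the parameters, and the remainder $O(t^{k+1})$ in the operator expansion must be controlled in operator norm between these weighted spaces. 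The $x_0$-derivatives are handled by differentiating the whole family and absorbing the extra curvature terms into the same framework. Once these estimates are in hand, the Neumann remainder is bounded by $Cp^{-(k+1)/2}$, and Sobolev embedding converts the weighted $L^2$-bounds into the stated $\cC^{m'}$ pointwise estimate, completing the proof.
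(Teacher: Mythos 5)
Your proposal is correct and follows essentially the same route as the proof the paper relies on: Theorem~\ref{t:offdiag} is stated here without proof, by citation to \cite[Theorem~4.1.24]{MM07} and \cite{DLM04a}, and the argument given there is precisely your scheme of localization via the spectral gap and finite propagation speed, rescaling to the model operator $\cL$ with kernel projection $\cP$, and resolvent/contour expansion controlled by uniform weighted Sobolev estimates. Your parity and degree bookkeeping for $J_r$ (each $\mO_{r_i}$ raising the degree by at most $r_i+2$, giving $\deg J_r\leqslant r+2j\leqslant 3r$) likewise matches \cite[Theorem~4.1.21 and \S4.1.8]{MM07} and \cite[Theorem~4.6]{DLM04a}.
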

%------------------------------------
\begin{rem}\label{t:offdiag1} For the above properties of 
$J_{r}(Z,Z')$ see \cite[Theorem\,4.1.21 and end of \S 4.1.8]{MM07}. 
They are also given in
\cite[Theorem 4.6, (4.107) and (4.117)]{DLM04a}. 
Moreover, by \cite[(1.2.19) and (4.1.28)]{MM07},  $\kappa$
has a Taylor expansion with coefficients the derivatives of $R^{TX}$. 
As in \cite[(4.1.101)]{MM07} or  
\cite[Lemma 3.1 and (3.27)]{MM12} we have
\begin{equation}\label{abk2.82}
\begin{split}
\kappa(Z)^{-1/2}
= 1+\frac{1}{6} \ric(z,\ov{z})\  + \cO (|Z|^3)\,
=1+\frac{1}{3} R_{\ell\ov{k}k\ov{q}} z_{\ell}\ov{z}_{q} + \cO (|Z|^3).
\end{split}
\end{equation}
%----------------------------------
Note that a more powerful result than the near-off diagonal 
expansion from Theorem \ref{t:offdiag} holds. 
Namely, by \cite[Theorem\,4.18']{DLM04a}, 
\cite[Theorem 4.2.1]{MM07}, 
the full off-diagonal expansion of the Bergman kernel holds 
(even for symplectic manifolds),
i.e., an analogous result to \eqref{bk2.79} for $|Z|,|Z'| \leqslant
\varepsilon$.
This appears naturally in the proof of the diagonal expansion 
of the Bergman kernel on orbifolds in  \cite[(5.25)]{DLM04a}
or \cite[(5.4.14), (5.4.23)]{MM07}.
\end{rem}

%and reciprocals of linear combinations of eigenvalues of 
%$\dot{R}^L$ at $x_0$\,.
\begin{prop}\label{t:offdiag2}
%[{\cite[Remark\,4.1.26]{MM07}, \cite[(2.19)]{MM12}}]\label{rem1} 
The coefficient $\cF_1$ vanishes identically:
$\cF_1(Z,Z')=0$ for all $Z,Z'$. Therefore the coefficient of 
$p^{-1/2}$ in the expansion of 
$p^{-n}P_p(p^{-1/2}Z,p^{-1/2}Z')$ vanishes, so the latter 
%$p^{-n}P_p(Z/\sqrt{p},Z'/\sqrt{p})$ 
converges to $\cF_0(Z,Z')$ at rate $p^{-1}$ as $p\to\infty$.
\end{prop}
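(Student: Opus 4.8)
The plan is to prove that $J_1\equiv 0$: since $\cP(Z,Z')>0$ everywhere and $\cF_1=J_1\,\cP$ by \eqref{bk2.75}, this is equivalent to $\cF_1=0$, and the asserted convergence rate then follows by a rescaling. First I would record the three properties of $J_1$ supplied by Theorem \ref{t:offdiag}: it is \emph{odd} in $(Z,Z')$ (its parity equals $r=1$), it has degree $\leqslant 3$, and its coefficients are polynomials in $R^{TX}$, $R^E$ and their covariant derivatives of order $\leqslant r-2=-1$. The last property is decisive: covariant derivatives of negative order do not exist, and the curvatures themselves are derivatives of order $0>-1$, so \emph{no} curvature quantity is available to enter the coefficients of $J_1$. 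Hence every monomial of $J_1$ is curvature-free, i.e.\ a universal constant multiple (in $\End(E)_{x_0}$) of a monomial in $Z,Z'$.

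It then remains to eliminate these universal curvature-free monomials, and here I see two routes. The first uses the invariance of the construction under the diagonal circle action $(Z,Z')\mapsto(e^{\imat\theta}Z,e^{\imat\theta}Z')$: the model kernel \eqref{toe1.3} depends only on the phase-invariant quantities $|z_i|^2,|z_i^{\prime}|^2,z_i\ov{z}_i^{\prime}$, so it is fixed by this action, and the curvature-free part of each $\cF_r$ must be fixed as well. A curvature-free monomial is invariant only if it contains equally many holomorphic and antiholomorphic factors, hence has even total degree, which is incompatible with the odd parity of $J_1$; so the curvature-free part of $J_1$ vanishes and $J_1\equiv 0$. The second, perhaps cleaner, route exploits that $J_1$ is now a \emph{universal} polynomial, independent of the geometry, and may therefore be read off from the flat Bargmann--Fock model on $\C^n$, where the Bergman kernel equals $\cP$ exactly and all corrections $\cF_{r\geqslant1}$ vanish identically; universality then transports $J_1=0$ to every compact K\"ahler manifold.

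I expect the main obstacle to be turning this vanishing of the curvature-free part into a rigorous statement rather than a heuristic, i.e.\ justifying either the circle-invariance of the $\cF_r$ or the legitimacy of the flat-model evaluation. At bottom both amount to checking that the weight-one term $\mO_1$ in the Taylor expansion of the rescaled operator vanishes. This is where the geometry enters: in the normal coordinates used here the metric and the density $\kappa^{1/2}$ carry no linear Taylor term --- already visible in \eqref{abk2.82}, where $\kappa(Z)^{-1/2}=1+\tfrac16\ric(z,\ov z)+\cO(|Z|^3)$ starts at order two --- and the K\"ahler condition together with the prequantum normalization $\om=\tfrac{\imat}{2\pi}R^L$ makes the remaining first-order contributions of the connections cancel, as in \cite[Theorem\,4.6]{DLM04a} and \cite[Theorem\,4.1.21]{MM07}. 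Granting $\mO_1=0$, the recursion expressing $\cF_1$ through $\mO_1$ yields $\cF_1=0$ at once.

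Finally I would deduce the convergence statement by rescaling. Substituting $Z\mapsto p^{-1/2}Z$, $Z'\mapsto p^{-1/2}Z'$ in \eqref{bk2.79} with $k=1$ turns the arguments $\sqrt p\,Z,\sqrt p\,Z'$ of the $\cF_r$ into $Z,Z'$, and since the $p^{-1/2}$ term now carries the vanishing factor $\cF_1$ it drops out, leaving
\begin{equation*}
p^{-n}P_p(p^{-1/2}Z,p^{-1/2}Z')=\cF_0(Z,Z')\,\kappa^{-\frac12}(p^{-1/2}Z)\,\kappa^{-\frac12}(p^{-1/2}Z')+\cO(p^{-1}).
\end{equation*}
By \eqref{abk2.82} the function $\kappa^{-1/2}$ has vanishing linear term, so $\kappa^{-1/2}(p^{-1/2}Z)=1+\cO(p^{-1})$, and therefore $p^{-n}P_p(p^{-1/2}Z,p^{-1/2}Z')$ converges to $\cF_0(Z,Z')=\cP(Z,Z')\Id_E$ at the rate $p^{-1}$, as claimed.
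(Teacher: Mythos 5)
Your argument is correct in substance but takes a genuinely different route from the paper, whose entire proof is a citation: the authors point to \cite[Remark\,4.1.26]{MM07} and \cite[(2.19)]{MM12}, where the vanishing of $\cF_1$ comes from the explicit recursion $\cF_1=-\cL^{-1}\cP^{\perp}\mO_1\cP-\cP\mO_1\cL^{-1}\cP^{\perp}$ together with the computation that the weight-one operator $\mO_1$ vanishes in the K\"ahler, prequantum setting --- exactly the ``granting $\mO_1=0$'' endgame you describe at the end of your third paragraph. Your softer route instead squeezes $J_1$ between the structural data of Theorem \ref{t:offdiag}: for $r=1$ the coefficients may involve derivatives of the curvatures of order $\leqslant -1$, i.e.\ nothing, so $J_1$ is a universal constant-coefficient polynomial, odd and of degree $\leqslant 3$, and any such polynomial invariant under the natural $U(n)$-change of complex orthonormal frame (equivalently, readable off a flat model) must pair holomorphic with antiholomorphic factors and hence be even --- contradiction. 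What this buys is a proof that needs no knowledge of the $\mO_r$ recursion; what it costs is that the decisive inputs (universality of the constants and equivariance under change of frame, or the legitimacy of evaluating on a flat model) are not part of the \emph{statement} of Theorem \ref{t:offdiag} but of its proof in \cite{DLM04a,MM07}, so you are implicitly re-importing the very machinery the paper cites. Two points to tighten if you pursue your version: (i) the flat Bargmann--Fock model on $\C^n$ is not compact, so you should either use a flat torus with a theta line bundle (where the corrections are $\cO(p^{-\infty})$) or invoke the locality of the expansion; (ii) the $U(n)$-invariance should be derived from the uniqueness of the coefficients in \eqref{bk2.79} under a change of orthonormal frame at $x_0$, which is a one-line but necessary remark. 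Your final rescaling step, using \eqref{abk2.82} to see that $\kappa^{-1/2}(p^{-1/2}Z)=1+\cO(p^{-1})$, matches what the paper intends and is fine.
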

\begin{proof}
This is %the content of 
{\cite[Remark\,4.1.26]{MM07} or  \cite[(2.19)]{MM12}}, 
see also \cite[(4.107), (4.117), (5.4)]{DLM04a}.
\end{proof}
When $E=\C$ with trivial metric, 
the vanishing of $\cF_1$ was recently rediscovered in 
\cite[Theorem\,2.1]{LS13} ($b_1(u,v)=0$ therein). In \cite{LS13} 
an equivalent formulation \cite{SZ02} of the 
expansion \eqref{bk2.79} is used, based on the analysis of 
the Szeg\"o kernel from \cite{BS:76}.
In \cite[Theorem\,2.1]{LS13} further off-diagonal coefficients 
$\cF_2$, $\cF_3$, $\cF_4$ are calculated in the $K$-coordinates.
From \cite[(3.22)]{MM12}, we see that the usual normal coordinate is 
at least a $K$-coordinate at order $3$, this explains the vanishing of 
$\mF_{1}$ implies the vanishing of $b_{1}$ in $K$-coordinates.
We wish to point out that we calculated in \cite{MM12}
the coefficients $\cF_1,\ldots,\cF_4$ on the diagonal, 
using the off-diagonal expansion \eqref{bk2.79} and evaluating $\cF_r$ 
for $Z=Z'=0$. Thus, off-diagonal formulas for $\cF_1,\ldots,\cF_4$ 
are implicitly contained in \cite{MM12}. 
We show below how the coefficient $\cF_2$ can be calculated in 
the framework of \cite{MM12}.

We use the notation in \cite[(3.6)]{MM12}, then 
 $\br=8R_{m\ov{q}q\ov{m}}$ is the scalar curvature.

\begin{prop}\label{rem2} 
 The coefficient $J_{2}$ in  \eqref{bk2.75} is given by
\begin{equation}\label{bk3.5}
\begin{split}
J_2(Z,Z')=&
-   \frac{\pi}{12}R_{k\ov{m}\ell\ov{q}}
\Big( 
z_{k}z_{\ell}\ov{z}_m\ov{z}_q  +6 z_{k} z_{\ell}\ov{z}'_m\ov{z}'_q
- 4z_k z_{\ell}\ov{z}_m\ov{z}'_q -4 z_k z'_{\ell}\ov{z}'_m\ov{z}'_q
+ z'_{k} z'_{\ell}\ov{z}'_m\ov{z}'_q \Big)\\
&- \frac{1}{3}R_{k\ov{m} q\ov{q}} (z_k \ov{z}_m 
+ z'_k \ov{z}'_m) + \frac{1}{8\pi}\br
+\frac{1}{\pi}R^E_{q\ov{q}}- \frac12\big(z_{\ell}\ov{z}_{q} 
-2 z_{\ell}\ov{z}'_{q}
+z'_{\ell}\ov{z}'_{q}\big)R^E_{\ell\ov{q}}\,.
\end{split}
\end{equation}
\end{prop}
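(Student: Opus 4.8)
The plan is to run the resolvent/transfer recursion of \cite{MM12} with the off-diagonal variables $Z,Z'$ kept general, instead of specializing to $Z=Z'=0$ as was done there for the diagonal coefficients. First I would recall the rescaling of \cite[\S3]{MM12}: fixing $x_0$ and setting $t=p^{-1/2}$, the conjugated and rescaled Kodaira Laplacian becomes an operator $\cL_t$ on $T_{x_0}X\cong\C^n$ with a Taylor expansion
\begin{equation*}
\cL_t=\cL_0+\sum_{r\geqslant1}t^r\,\cO_r,
\end{equation*}
where $\cL_0=\sum_i b_i b_i^+$ is the model harmonic oscillator built from the operators $b_i=-2\partial_{z_i}+\pi\ov z_i$, $b_i^+=2\partial_{\ov z_i}+\pi z_i$, with $[b_i,b_j^+]=-4\pi\delta_{ij}$. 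The Gaussian $\cP$ of \eqref{toe1.3} satisfies $b_i^+\cP(\cdot,Z')=0$ and spans $\Ker\cL_0$; let $P$ denote the orthogonal projection onto $\Ker\cL_0$, whose kernel is $\cP$. Each $\cO_r$ is a differential operator whose polynomial coefficients have the same parity as $r$, and I would import the explicit $\cO_1$ (linear in $R^{TX}$, odd of degree $3$) and $\cO_2$ (carrying the quadratic $R^{TX}$-terms, the scalar curvature $\br$, and the bundle curvature $R^E$) from \cite[\S3]{MM12}.

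Next I would use the spectral-projection formula for the coefficients $\cF_r=P^{(r)}$ of Theorem~\ref{t:offdiag}, obtained by expanding $P_t=\frac{1}{2\pi\imat}\oint_\delta(\lambda-\cL_t)^{-1}\,d\lambda$ in $t$ and performing the residue calculus, as in \cite[(4.1.92)]{MM07} and \cite[(4.107)]{DLM04a}. Writing $S=(\cL_0^{-1})^\perp$ for the inverse of $\cL_0$ on the orthogonal complement of $\ran P$, the $t^2$-coefficient is
\begin{equation*}
\cF_2=-\big(S\cO_2 P+P\cO_2 S\big)+P\cO_1 S\cO_1 S+S\cO_1 S\cO_1 P+S\cO_1 P\cO_1 S-P\cO_1 S^2\cO_1 P,
\end{equation*}
all would-be terms containing a factor $P\cO_1 P$ being absent because the first-order eigenvalue shift $P\cO_1 P$ vanishes (in line with $\cF_1=0$ in Proposition~\ref{t:offdiag2}). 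The coefficient $J_2$ is then read off from $\cF_2=J_2\cP$.

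The computation itself reduces to harmonic-oscillator algebra: since $b_i^+\cP=0$ and $\cL_0$ is diagonal in the number grading with eigenvalues in $4\pi\N$, each $S$ acts as an explicit scalar on each graded component. Applying the explicit $\cO_1,\cO_2$ and the commutation relations, one contracts the curvature coefficients against the monomials generated by $b_i\cP=2\pi(\ov z_i-\ov z'_i)\cP$, collects the resulting monomials in $(z,\ov z,z',\ov z')$, and matches to \eqref{bk3.5}. The $\cO_2$-block produces the scalar term $\tfrac{1}{8\pi}\br$, the bundle terms $\tfrac1\pi R^E_{q\ov q}$ and $-\tfrac12(z_\ell\ov z_q-2z_\ell\ov z'_q+z'_\ell\ov z'_q)R^E_{\ell\ov q}$, and part of the quartic $R_{k\ov m\ell\ov q}$-terms, while the interlaced $\cO_1 S\cO_1$-terms supply the remaining quartic curvature terms. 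I expect the main obstacle to be the bookkeeping of these $\cO_1 S\cO_1$-blocks: one must track how $S$ inverts $\cL_0$ on each intermediate eigenspace while preserving the joint $z$--$z'$ structure of $\cP$, and the clean form $J_2\cP$ is reconstituted only after summing the terms in which $S$ stands at an outer end.

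Finally I would verify the outcome by the internal checks available here. Setting $Z=Z'=0$ collapses every monomial in \eqref{bk3.5} and leaves $J_2(0,0)=\tfrac{1}{8\pi}\br+\tfrac1\pi R^E_{q\ov q}$, which must reproduce the diagonal second coefficient of \cite{MM12}; the parity and degree constraints of Theorem~\ref{t:offdiag} ($J_2$ even, $\deg\leqslant6$, coefficients polynomial in $R^{TX}$ and $R^E$) are manifest in \eqref{bk3.5}. As a further test, the Ricci terms $-\tfrac13 R_{k\ov m q\ov q}(z_k\ov z_m+z'_k\ov z'_m)$ must be compatible with the Ricci contribution to $\kappa^{-1/2}$ in \eqref{abk2.82} once the normalization factor $\kappa^{-1/2}(Z)\kappa^{-1/2}(Z')$ of \eqref{bk2.79} has been correctly separated from $P^{(2)}$.
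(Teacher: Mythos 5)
Your overall framework --- the Taylor expansion $\cL_t=\cL_0+\sum_r t^r\cO_r$ of the rescaled Kodaira Laplacian, the resolvent/residue calculus for the spectral projection, and the harmonic-oscillator algebra with $b_i,b_i^+$ acting on $\cP$ --- is the same machinery the paper uses. The gap is in your reduction of the second-order term. You keep the four interlaced blocks $P\cO_1S\cO_1S$, $S\cO_1S\cO_1P$, $S\cO_1P\cO_1S$, $-P\cO_1S^2\cO_1P$, discarding only terms containing a factor $P\cO_1P$, and you justify this by the vanishing of the first-order eigenvalue shift. That justification is already insufficient for $\cF_1=0$: since $\cF_1=-S\cO_1P-P\cO_1S$, one needs $\cP^\perp\cO_1\cP=0$, i.e.\ $\cO_1\cP=0$ (hence $\cP\cO_1=0$ by adjunction), not merely $\cP\cO_1\cP=0$. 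In the K\"ahler setting of this paper that stronger identity does hold, and it is exactly what collapses your six-term expression to the two-term formula the paper starts from,
\begin{equation*}
\cF_{2}=-\cL^{-1}\cP^\perp\mO_2\cP-\cP\mO_2\cL^{-1}\cP^\perp
\end{equation*}
(equation \eqref{bk3.4}, i.e.\ \cite[(4.1.107)]{MM07} or \cite[(2.19)]{MM12}): each of your $\cO_1$-blocks contains a factor $\cO_1\cP$ or $\cP\cO_1$ and vanishes identically.

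This is not merely a matter of efficiency, because your stated expectation that the $\cO_1S\cO_1$-blocks \emph{supply the remaining quartic curvature terms} is wrong here: all of the quartic $R_{k\ov{m}\ell\ov{q}}$-terms in \eqref{bk3.5} come from the single effective operator $\wi{\mO_2}$ (plus the $R^E$-piece) applied to $\cP$ via $b_i\cP=2\pi(\ov{z}_i-\ov{z}'_i)\cP$, with the primed counterparts produced by the adjoint identity $(\cP\mO_2\cL^{-1}\cP^\perp)^*=\cL^{-1}\cP^\perp\mO_2\cP$ and $T^*(Z,Z')=\ov{T(Z',Z)}$. A computation organized around your expectation would either have to discover the vanishing of the $\cO_1$-blocks term by term or, more likely, would miscount the quartic contributions. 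Once the reduction to \eqref{bk3.4} is in place, the rest of your plan (normal-ordering the $b$'s past the curvature polynomials, inverting $\cL_0$ on the graded components, checking $J_2(0,0)=\frac{1}{8\pi}\br+\frac{1}{\pi}R^E_{q\ov{q}}$) matches the paper's computation.
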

    \comment{
\begin{equation}\label{bk3.5}
\begin{split}
J_2(Z,Z')=\frac{1}{8\pi}\br+
\frac{\pi}{6}\cF_r (\sqrt{p}Z,\sqrt{p}Z')
\kappa ^{-\frac{1}{2}}(Z)\kappa^{-\frac{1}{2}}(Z')
(z_{\ell}z_m\ov{z}'_k\ov{z}'_q
-z_{\ell}z_m\ov{z}_k\ov{z}_q
-z'_{\ell}z'_m\ov{z}'_k\ov{z}'_q)\\
+\frac{1}{\pi}R^E_{q\ov{q}}+\frac12\big(z_{\ell}\ov{z}'_{q}
+\ov{z}_{q}z'_{\ell}\big)R^E_{\ell\ov{q}}\,.
\end{split}
\end{equation}
}
\begin{rem}
Setting $Z=Z'=0$ in \eqref{bk3.5} we obtain the coefficient 
$\boldsymbol{b}_1(x_0)=J_2(0,0)=\frac{1}{8\pi}\br
+\frac{1}{\pi}R^E_{q\ov{q}}$ of $p^{-1}$ of the (diagonal) 
expansion of $p^{-n}P_p(x_0,x_0)$, 
cf.\ \cite[Theorem\,4.1.2]{MM07}.

Moreover, in order to obtain the coefficient of $p^{-1}$ 
in the expansion \eqref{bk2.79} we multiply 
$\cF_2(\sqrt{p}Z,\sqrt{p}Z')$ to the expansion of 
$\kappa(Z)^{-1/2}\kappa(Z')^{-1/2}$ with respect to 
the variable $\sqrt{p}Z$ obtained from \eqref{abk2.82}. 
If $E=\C$ the result is a polynomial which is the sum of a 
homogeneous polynomial of order four and a constant, similar to \cite{LS13}. 
\end{rem}
%--------------------------------
\begin{proof}[Proof of Proposition \ref{rem2}] Set 
\begin{equation}\label{toe1.1}
\begin{split}
&b_i=-2{\frac{\partial}{\partial z
_i}}+\pi \overline{z}_i\,,\quad
b^{+}_i=2{\frac{\partial}{\partial\overline{z}_i}}+\pi
z_i\,,\quad
\cL=\sum_{i=1}^n b_i\, b^{+}_i\,,\\
&\wi{\mO_2} = \frac{b_{m} b_{q}}{48\pi}  R_{k\ov{m}\ell\ov{q}}\, z_{k}\, z_{l}
    +  \frac{b_{q}}{3\pi} R_{\ell\ov{k}k\ov{q}}\, z_\ell
- \frac{b_{q}}{12} R_{k\ov{m}\ell\ov{q}}\, z_k\,z_\ell\, 
\ov{z}_{m}^{\,\prime}\,.
\end{split}
\end{equation}
By \cite[(4.1.107)]{MM07} or \cite[(2.19)]{MM12}, we have 
\begin{equation}\label{bk3.4}
\cF_{2,\,x_{0}}=- \cL^{-1} \cP^\bot\mO_2 \cP
- \cP \mO_2\cL^{-1}\cP^\bot .
\end{equation}
%Note that $(\cP \mO_2\cL^{-1}\cP^\bot)^*=\cL^{-1} \cP^\bot\mO_2 \cP$
%by \cite[Theorem\,4.1.8]{MM07}.
By \cite[(4.1a), (4.7)]{MM12} we have
\begin{equation} \label{bk3.6}
\begin{split}
&(\cL^{-1} \cP^\bot\mO_2 \cP)(Z,Z')= (\cL^{-1} \mO_2 \cP)(Z,Z') 
=\Big\{\wi{\mO_2}
+ \frac{b_q}{4\pi}  R^E_{\ell\ov{q}}\,z_\ell\Big\}\cP (Z,Z') .
\end{split}
\end{equation}
By the symmetry properties of the curvature \cite[Lemma\,3.1]{MM12} 
we have 
\begin{align}\label{bk3.8}
R_{k\ov{m}\ell\ov{q}} =R_{\ell\ov{m}k\ov{q}}=
R_{k\ov{q}\ell\ov{m}}= R_{\ell\ov{q}k\ov{m}}\,, \quad
\ov{R_{k\ov{m}\ell\ov{q}}}= R_{m\ov{k}q\ov{\ell}}, %\br= 8\,  R_{m\ov{q}q\ov{m}}\,, 
\quad (R^E_{k\ov{q}})^* 
=R^E_{q\ov{k}}\,.
\end{align}
We use throughout that 
$[g(z,\ov{z}),b_j]=2\frac{\partial}{\partial z_j}g(z,\ov{z})$ for 
any polynomial $g(z,\ov{z})$ (cf.\ \cite[(1.7)]{MM12}). 
%As a consequence
%$\big[R_{k\ov{m}\ell\ov{q}} z_{k} z_{\ell}, b_{m} b_{q}\big]
%= 8 b_q R_{k\ov{k}\ell\ov{q}}  z_{\ell} + 8 R_{m\ov{m}q\ov{q}}$ 
Hence from \eqref{bk3.8}, we get
\begin{equation}\label{bk3.9}
\begin{split}
&    b_q R_{k\ov{k}\ell\ov{q}}  z_{\ell}
    = R_{k\ov{k}\ell\ov{q}}  z_{\ell} b_q  - 2 R_{k\ov{k}q\ov{q}},\\
&b_{q} R_{k\ov{m}\ell\ov{q}}\, z_k\,z_\ell
= -4  R_{k\ov{m}q\ov{q}} z_{k} +  R_{k\ov{m}\ell\ov{q}}\, z_k\,z_\ell 
b_{q},\\
&b_{m} b_{q}R_{k\ov{m}\ell\ov{q}}\, z_{k}\, z_{l}
=R_{k\ov{m}\ell\ov{q}}\, z_{k}\, z_{l}b_{m} b_{q}
-8 R_{k\ov{k}\ell\ov{q}}  z_{\ell}b_q  +  8 R_{m\ov{m}q\ov{q}}\,. 
\end{split}
\end{equation}
Thus from \eqref{toe1.1} and \eqref{bk3.9}, we get 
\begin{equation}
\begin{split}
\wi{\mO_2} =\frac{1}{48\pi}R_{k\ov{m}\ell\ov{q}}\, z_{k}\, z_{l} (b_{m}
-     4\pi\ov{z}_{m}^{\,\prime} )  b_{q}
    +\frac{1}{6\pi}R_{k\ov{k}\ell\ov{q}}z_{\ell}b_q
    - \frac{1}{2\pi}R_{m\ov{m}q\ov{q}}  
 + \frac{1}{3} R_{k\ov{m}q\ov{q}} z_{k}\ov{z}_{m}^{\,\prime}.
\end{split}
\end{equation}
Now, $(b_i\cP)(Z,Z^{\prime})=2\pi (\ov{z}_i-\ov{z}^{\prime}_{i})
\cP(Z,Z^{\prime})$, see \cite[(4.1.108)]{MM07} or \cite[(4.2)]{MM12}. 
Therefore
\begin{equation}\label{bk3.7}
\begin{split}
& (\wi{\mO_2}\cP)(Z,Z')  
=\Big[\frac{\pi}{12}R_{k\ov{m}\ell\ov{q}}\, z_{k}\, z_{l}
    (\ov{z}_m-3 \ov{z}'_m)(\ov{z}_q-\ov{z}'_q)
    +\frac13 R_{k\ov{k}\ell\ov{q}}z_{\ell}(\ov{z}_{q}-\ov{z}'_{q})  \\ 
    &\hspace{2cm}-\frac{1}{2\pi}R_{m\ov{m}q\ov{q}}
 +\frac13R_{k\ov{m}q\ov{q}}z_{k}\ov{z}_{m}^{\,\prime}
\Big]\cP(Z,Z')\\
&\hspace{0.5cm}=\Big[\frac{\pi}{12}R_{k\ov{m}\ell\ov{q}}\, z_{k}\, z_{\ell}
(\ov{z}_m-3\ov{z}'_m)(\ov{z}_q-\ov{z}'_q)
+\frac13R_{k\ov{m}q\ov{q}}z_{k}\ov{z}_{m}
-\frac{1}{2\pi}R_{m\ov{m}q\ov{q}}\Big]\cP(Z,Z').
\end{split}
\end{equation}
We know that for an operator $T$ we have $T^*(Z,Z')=\ov{T(Z',Z)}$, thus
\begin{equation}\label{bk3.11}
\begin{split}
& (\wi{\mO_2}\cP)^{*}(Z,Z')   
=\Big[\frac{\pi}{12}R_{k\ov{m}\ell\ov{q}}\, \ov{z}'_{m}\, \ov{z}'_{q}
(z'_{k}-3z_k)(z'_\ell- z_\ell)
+\frac13R_{k\ov{m}q\ov{q}}\ov{z}'_{m} z'_{k}
-\frac{1}{2\pi}R_{m\ov{m}q\ov{q}}\Big]\cP(Z,Z').
\end{split}
\end{equation}

%We have of course to take into account also the term 
%$- \cP \mO_2\cL^{-1}\cP^\bot$ from \eqref{bk3.4}. 
We have $(\cP \mO_2\cL^{-1}\cP^\bot)^*
=\cL^{-1} \cP^\bot\mO_2 \cP$ by \cite[Theorem\,4.1.8]{MM07}, so
from \eqref{bk3.7} and \eqref{bk3.11}, we obtain the factor of
$R_{k\ov{m}\ell\ov{q}}$ in  \eqref{bk3.5}.
%and we know that  $T^*(Z,Z')=\ov{T(Z',Z)}$. 
%Taking adjoints in this way in \eqref{bk3.7} 
%and adding their contributions we obtain the first line of \eqref{bk3.5}.

Let us calculate the contribution of the last term (curvature of $E$). We have
\begin{equation}\label{bk3.12}
-\Big(\frac{b_q}{4\pi}  R^E_{\ell\ov{q}}\,z_\ell\cP\Big)(Z,Z')
=\Big(\frac{1}{2\pi}R^E_{q\ov{q}}-\frac12z_{\ell}(\ov{z}_{q}-\ov{z}'_{q})
R^E_{\ell\ov{q}}\Big)\cP(Z,Z').
\end{equation}
and %since $(R^E_{k\ov{q}})^* 
%=R^E_{q\ov{k}}$ (cf.\ \cite[Lemma\,3.1]{MM12}) 
by \eqref{bk3.8}, we also have 
\begin{equation}\label{bk3.13}
-\Big(\frac{b_q}{4\pi}  R^E_{\ell\ov{q}}\,z_\ell\cP\Big)^*(Z,Z')
=\Big(\frac{1}{2\pi}R^E_{q\ov{q}}
-\frac12\ov{z}'_{\ell}(z'_{q}-z_{q})R^E_{q\ov{\ell}}\Big)\cP(Z,Z').
\end{equation}
The contribution to $J_2$ of the term on $E$ is thus given by
the last two terms in \eqref{bk3.5}.
\end{proof}
%------------------------------

%\bibliographystyle{amsplain}
%\bibliography{mmabook,mmbook}

\def\cprime{$'$} \def\cprime{$'$}
\providecommand{\bysame}{\leavevmode\hbox to3em{\hrulefill}\thinspace}
\providecommand{\MR}{\relax\ifhmode\unskip\space\fi MR }
% \MRhref is called by the amsart/book/proc definition of \MR.
\providecommand{\MRhref}[2]{%
  \href{http://www.ams.org/mathscinet-getitem?mr=#1}{#2}
}
\providecommand{\href}[2]{#2}

\end{document}